\newcommand{\lleft}{\left}
\newcommand{\rright}{\right}
\newtheorem{thm}{Theorem}
\newtheorem{lemma}{Lemma}
\theoremstyle{definition}
\def\index#1{}
\begin{document}

\begin{frontmatter}
\pretitle{Research Article}

\title{On estimation of expectation of simultaneous renewal time of time-inhomogeneous Markov chains using dominating sequence}

\author{\inits{V.}\fnms{Vitaliy}~\snm{Golomoziy}\ead[label=e1]{vitaliy.golomoziy@gmail.com}}
\address{\institution{Taras Shevchenko National University of Kyiv}, Faculty~of~Mechanics~and~Mathematics,
Department~of~Probability~Theory,~Statistics~and Actuarial Mathematics,
60~Volodymyrska Street, City of Kyiv, \cny{Ukraine}, 01033}



\markboth{V. Golomoziy}{Simultaneous renewal of time-inhomogeneous Markov chains}

\begin{abstract}
The main subject of the study in this paper is the simultaneous renewal
time for two time-inhomogeneous Markov chains which start with arbitrary
initial distributions. By a simultaneous renewal we mean the first time
of joint hitting the specific set $C$ by both processes. Under the
condition of existence a dominating sequence for both renewal sequences
generated by the chains and non-lattice condition for renewal
probabilities an upper bound for the expectation of the
simultaneous renewal time is obtained.
\end{abstract}
\begin{keywords}
\kwd{Coupling}
\kwd{renewal theory}
\kwd{Markov chain}
\kwd{random walk}
\end{keywords}
\begin{keywords}[MSC2010]%
\kwd{60J10}
\kwd{60K05}
\end{keywords}

\received{\sday{16} \smonth{3} \syear{2019}}
\revised{\sday{14} \smonth{8} \syear{2019}}
\accepted{\sday{14} \smonth{8} \syear{2019}}
\publishedonline{\sday{14} \smonth{10} \syear{2019}}
\end{frontmatter}

\section{Introduction}%
\label{sec1}

\subsection{Overview}%
\label{sec1.1}
In this paper, we are focused on studying a simultaneous renewal time
for two time-inhomogeneous, discrete-time Markov chains on a general
state space. By simultaneous renewal we mean visiting some set $C$ by
both chains\index{chains} at the same time.

While general renewal theory is well developed, properties of the
renewal sequences\index{renewal ! sequences} generated by time-inhomogeneous Markov chains are not
studied so seriously.
The literature available for this topic is scant.
Some generalizations of the general homogeneous theory could be
found, for example, in \cite{RenewalTheoryForFunctionals} or
\cite{NonlinearMarkovRenewalTheory}. At the same time, the problem under
consideration is important from both theoretical and practical
perspectives. Some of the existing results can be found in the
paper
\cite{NonHomogeneousConvolutions}, dedicated to 
investigating continuous-time
renewal processes generated by a sequence of independent but not
necessary identically distributed random variables.
The paper shows that a
general renewal equation is solvable using numerical methods and
demonstrates an application of this result in actuarial science. 
Another paper related to the renewal theory (of geometrically ergodic
time-homogeneous Markov chains) is \cite{Baxendale}.

Simultaneous renewal plays an essential role in investigation of the
stability of Markov chains,\index{Markov chains} particularly in the time-inhomogeneous case. In
the papers \cite{StabilityAndCoupling,MinorCondition} a~particular coupling construction has been
introduced to 
derive a stability estimate.\index{stability estimates} A key point of the whole
stability research is to construct a stochastic dominating \mbox{sequence} for
a series of coupling times and estimate the coupling time expectation\index{expectation} using
that sequence. Later,
the stability estimates\index{stability estimates} obtained in \cite{WidowPension} were applied to the widow pension actuarial model
which allows calculating variables related to this \mbox{actuarial} schema,\index{actuarial schema}
such as net-premiums and others. In the papers
\cite{MaxCoupling4,MaxCoupling3,MaxCoupling1,MaxCoupling2} stability estimates\index{stability estimates} are
obtained for discrete-space Markov chains in both time-homogeneous and
time-inhomogeneous case using the so-called Maximal Coupling technique.

Estimates for the expectation\index{expectation} of a simultaneous renewal time obtained
in this paper and stability estimates\index{stability estimates} derived from them have many
practical applications. In particular, we may 
point out applications in the
actuarial mathematics similar to the widow pension mentioned above.
Other examples could be: obtaining premium in actuarial models that are
represented by a perturbed Markov chain, such as seasonal factors.
Another area of application is the queuing theory. Markov chains\index{Markov chains}
stability results enable us to study non-homogeneous qeuing models or
models affected by some small non-homogeneous perturbation.

There are many works related to stability estimates\index{stability estimates} which use coupling
methods. For example stability estimates\index{stability estimates} for different versions of the
same time-homogeneous chain started with different initial
distributions, using standard coupling technique could be found in the papers
\cite{Douc,Douc2}. In \cite{MinorCondition}
we have extended results from \cite{Douc} and
\cite{Douc2} to the stability of two different (but close in some sense)
time-inhomogeneous Markov chains by introducing a modified coupling
technique which allows chains\index{chains} to couple and de-couple. Such construction
leads us to the necessity of 
investigating properties of the renewal proceses\index{renewal ! proceses}
generated by time-inhomogeneous Markov chains. Many works of other
authors are using similar coupling techniques to obtain stability
estimates\index{stability estimates} or convergence rates, see for example \cite{Fort,Fort2}
where subgeometrical Markov chains\index{subgeometrical Markov chains} are studied.

Simultaneous renewal has been studied in other works as well. In the
article \cite{Coupling0} an estimate for a coupling moment (what
we call simultaneous renewal in the present article) for
time-homogeneous processes was obtained under the existence condition
of the second moment of the renewal sequence.\index{renewal ! sequences} The critical feature of
the proof there was the Daley inequality\index{Daley inequality} (see \cite{Daley}) which
gives an estimate for average time of the excess of a renewal process
in the homogeneous case. Later, in the paper \cite{Excess} the
Daley inequality\index{Daley inequality} was extended to the time-inhomogeneous case which
allows obtaining an estimate for simultaneous renewal in the case of
a square-integrable dominating sequence for a time-inhomogeneous process
(see \cite{CouplingMoment}). This estimate involves the second
moment of the domination sequence, and in the present paper, we relax
that condition and only require the existence of the first moment. We
will compare the results of the present paper and
\cite{CouplingMoment}.

In the paper \cite{Coupling} there is a theorem that shows
finiteness of the expectation\index{expectation} of the simultaneous renewal time but
a computable estimate is not available.
There are also examples of how to check
the non-lattice regularity condition for a renewal sequence\index{renewal ! sequences} which is
involved in all the results related to the time-inhomogeneous case.

We also ask the reader to pay attention to the classical books
\cite{Lindvall,Thorisson}, devoted to the coupling method.

This paper consists of 4 sections. Section~\ref{sec1} contains introduction and
main definitions.

The main result is stated in Section~\ref{sec2}. It also includes a detailed
explanation of the conditions of the main theorem.

An example of an application of the main result concerning a birth-death
process can be found in Section~\ref{sec3}.

Section~\ref{sec4} is 
just the conclusion.

\subsection{Definitions and notations}%
\label{sec1.2}
The main object of the study is a pair of time-inhomogeneous,
independent, discrete time Markov chains\index{Markov chains} defined on the probability
space $(\varOmega , \mathfrak{F}, \mathbb{P})$, with the values in the
general state space $(E, \EuScript{E})$. We will denote these chains\index{chains} as
$X^{(1)}_{n}$ and $X^{(2)}_{n}$, $n\ge 0$, respectively.

We will use the following notation for the one-step transition
probabilities
%
\begin{align}
\begin{array}{c}
P_{lt}(x, A) = \mathbb{P}\bigl\{X^{(l)}_{t+1} \in A | X^{(l)}_{t} = x\bigr\},
\end{array} %
\end{align}
for any $x \in E$, $l \in \{1,2\}$, and any $A\in \EuScript{E}$.

Our primary goal is to obtain an upper bound for the expectation\index{expectation} of
simultaneous hitting 
a given set $C$ by both chains.\index{chains}
An application usually dictates
Applications usually dictate the choice of the set C. For a discrete state space
consisting of integers the very typical choice 
is $C=\{0\}$,
although this is not a single possible choice. Moreover, we do not
assume that the set $C$ consists of a single element. We will also allow
arbitrary initial conditions for the chains,\index{chains} and assume that each chain
has a finite expectation\index{finite expectation} for a first time hitting the set $C$ for a given
initial state.

We will continue to use the definitions and notations originated in
\cite{Coupling} and \cite{CouplingMoment}.

Define the renewal intervals
%
\begin{align}
\begin{array}{c}
\theta ^{(l)}_{0} = \inf \bigl\{t \ge 0, X^{(l)}_{t} \in C\bigr\},
\\[4pt]
\theta ^{(l)}_{n} = \inf \bigl\{t > \theta ^{(l)}_{n-1}, X^{(l)}_{t} \in C
\bigr\} - \theta ^{(l)}_{n-1},
\end{array} %
\end{align}
where $l\in \{1,2\}$, $n\ge 1$, and renewal times\index{renewal ! times}
%
\begin{align}
\tau ^{(l)}_{n} = \sum\limits
_{k=0}^{n}
\theta ^{(l)}_{k},\quad  l \in \{1,2 \},\ n\ge 0.
\end{align}

Then, for each $x\in C$, we can define the renewal probabilities
%
\begin{equation}
\label{gx_def} g^{(t,l)}_{n}(x) = \mathbb{P}\bigl\{\theta
^{(l)}_{k} = n | \tau ^{(l)}_{k-1} =
t, X^{(l)}_{t}=x \bigr\},\quad  l\in \{1,2\},\ n\ge 1,
\end{equation}
and the sequence
%
\begin{equation}
\label{gx_def_} g^{(t,l)}_{n} = \sup_{x\in C}
g^{(t,l)}_{n}(x),\quad  n\ge 1.
\end{equation}

We will also need a notation for sums of $g^{(t,l)}_{n}$:
%
\begin{equation}
\label{Gdef} G^{(t,l)}_{n} = \sum
\limits
_{k>n} g^{(t,l)}_{k}.
\end{equation}
$G^{(t,l)}_{n}$ can be interpreted as an estimate for the  probability that
renewal has not happened up to the time $n$.

The time of the simultaneous hitting the set $C$ is defined as
%
\begin{equation}
T = \inf \bigl\{ t >0:\ \exists m, n,\ t = \tau ^{(1)}_{m}
= \tau ^{(2)}_{n} \bigr\}.
\end{equation}

\section{Main result}%
\label{sec2}
First, we introduce a condition that guarantees finiteness of the
expectation\index{expectation} of the renewal times\index{renewal ! times} for both chains.\index{chains} In this paper, we use
a dominating sequence\index{dominating sequence} for this purpose.

\textit{Condition A}

\noindent There is a non-increasing sequence $G_{n},\ n\ge 0$, such that:
\begin{gather*}
G^{(t,l)}_{n} \le G_{n},
\\
\sum\limits
_{n\ge 0} G_{n} = m < \infty .
\end{gather*}

To make the further reasoning simpler, we allow the index $n$ in
$G_{n}$ to be negative. In this case we assume: $G_{n} = G_{0}$, $n<0$.

We do not require this dominating sequence\index{dominating sequence} to be probabilistic and allow
$G_{0} \ge 1$. Condition A will enable us to ascertain that both chains
$X^{(1)}, X^{(2)}$\index{chains} will have a finite expectation\index{finite expectation} of the renewal time,\index{renewal ! times}
which is a necessary condition for having a finite expectation\index{finite expectation} of
simultaneous renewal.

Opposing the paper \cite{CouplingMoment} we do not require the
existence of the second moment for a dominating sequence.\index{dominating sequence}

Next, we need some regularity condition on $u^{(t,l)}_{n}$ which
guarantees its separation from 0.

\textit{Condition B}

\noindent There is a constant $\gamma > 0$ and a number $n_{0} \ge 0$, such that for
all $t\ge 0$, $l \in \{1,2\}$, and $n \ge n_{0}$
%
\begin{equation}
\mathbb{P}\bigl\{X^{(l)}_{n+t} \in C | X^{(l)}_{n}
\in C \bigr\} \ge \gamma .
\end{equation}

It is essential that such condition also guarantees certain
``regularity'' and non-periodicity of a chain. The periodic chains\index{periodic chains} do
not satisfy it.

There are various results which allow checking the condition (A)
practically. See, for example \cite{CouplingExamples}, Theorems
4.1, 4.2, 4.3. We will use some of them later.

Before we state the main theorem, we should introduce some definitions,
which are used in the proof, and the auxiliary lemma.

The notations below are the same as in \cite{CouplingMoment}:
\begin{gather*}
\nu _{0} := \min \bigl\{j\ge 1: \tau ^{1}_{j}>n_{0}
\bigr\},
\\
B_{0} := \tau ^{1}_{\nu _{0}},
\\
\nu _{1} := \min \bigl\{j\ge \nu _{0}: \tau
^{2}_{j}-\tau ^{1}_{\nu _{0}}>n_{0},
\ \mbox{or}\ \tau ^{2}_{j} - \tau ^{1}_{\nu _{0}}=0
\bigr\},
\\
B_{1} := \tau ^{2}_{\nu _{1}} - \tau
^{1}_{\nu _{0}},
\end{gather*}
and further on,
\begin{gather*}
\nu _{2m} := \min \bigl\{j\ge \nu _{2m-1}:\ \tau
^{1}_{j} - \tau ^{2}_{\nu _{2m-1}}>n
_{0}\xch{,}{,,}\ \mbox{or}\ \tau ^{1}_{j} - \tau
^{2}_{\nu _{2m-1}}=0\bigr\},
\\
B_{2m} := \tau ^{1}_{\nu _{2m}} - \tau
^{2}_{\nu _{2m-1}},
\\
\nu _{2m+1} := \min \{j\ge \nu _{2m}:\ \tau
^{2}_{j} - \tau ^{1}_{\nu _{2m}}>n
_{0}\,,\ \mbox{or}\ \tau ^{2}_{j} - \tau
^{1}_{\nu _{2m}=0 },
\\
B_{2m+1} := \tau ^{2}_{\nu _{2m+1}} - \tau
^{1}_{\nu _{2m}}.
\end{gather*}

We will call $\nu _{k}$ 
renewal trials.\index{renewal ! trials} Let's define $\tau = \min
\{n\ge 1: B_{n} = 0\}$ and put $B_{t} = 0$, if $t\ge \tau $.

For ease of use, we will introduce the variable
\begin{equation*}
S_{n} = \sum\limits
_{k=0}^{n}
B_{k}.
\end{equation*}
Note, that $S_{2n} = \tau ^{1}_{\nu _{2n}}$, $S_{2n+1} = \tau ^{2}_{\nu
_{2n+1}}$, however, the use of the variable $S_{n}$ makes the text much more
readable.

\begin{thm}
\label{theorem1}
Assume that chains\index{chains} $(X^{(l)}_{n})$, $l\in \{1,2\}$, start with some
initial distributions $\lambda _{1}$ and $\lambda _{2}$ and that
expectations\index{expectation} of the first time each process hits the set $C$ are finite.
Denote them as $m_{1}(\lambda _{1})$ and $m_{2}(\lambda _{2})$
respectively. Assume also, that conditions (A) and (B) introduced above
hold true. In this case, there exists a dominating sequence
$\hat{S}_{n}$\index{dominating sequence} for the simultaneous renewal time $T$:
\begin{equation*}
\mathbb{P}(T>n) \le \hat{S}_{n},
\end{equation*}
such that
%
\begin{equation}
\label{main_result} \mathbb{E}[T] \le \sum\limits
_{n \ge 0}
\hat{S}_{n} \le m_{1}(\lambda _{1}) +
m_{2}(\lambda _{2}) + (n_{0}
G_{0} + m) (1+\gamma )/\gamma .
\end{equation}
\end{thm}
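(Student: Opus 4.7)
The plan is to show $T \le S_\tau$ and then bound $\mathbb{E}[S_\tau]=\sum_{k\ge 0}\mathbb{E}[B_k]$ by treating the initial intervals $B_0, B_1$ separately from the subsequent ``trials'' for $k\ge 2$. The inequality $T\le S_\tau$ is immediate from the construction of $\nu_k$: at $k=\tau$ we have $B_\tau = 0$, which forces both chains $X^{(1)}$ and $X^{(2)}$ to visit $C$ at the common time $S_\tau$. Taking the dominating sequence $\hat S_n := \mathbb{P}(S_\tau > n)$, we get $\mathbb{E}[T] \le \mathbb{E}[S_\tau] = \sum_{n\ge 0}\hat S_n$, so the task reduces to estimating $\mathbb{E}[S_\tau]$.

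For $B_0 = \tau^{1}_{\nu_0}$ I would decompose $\tau^{1}_{\nu_0} = \tau^{1}_0 + \sum_{j=1}^{\nu_0}\theta^{1}_j$, bound $\mathbb{E}[\tau^{1}_0] = m_1(\lambda_1)$, and apply Condition~A to the remaining sum of renewal intervals needed to cross the level $n_0$, obtaining $\mathbb{E}[B_0] \le m_1(\lambda_1)+n_0 G_0+m$. The analogous argument applied to chain~2 after shifting the origin to $\tau^{1}_{\nu_0}$ yields $\mathbb{E}[B_1] \le m_2(\lambda_2)+n_0 G_0+m$. For later trials $k\ge 2$, Condition~B plays the key role: conditionally on the past of the construction at the start of trial $k$, the waiting chain lands exactly on the target point in $C$ with probability at least $\gamma$, so $\mathbb{P}(\tau > k)\le (1-\gamma)^{k-1}$. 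Combining this with a uniform conditional estimate $\mathbb{E}[B_k \mid B_k>0, \text{past}]\le n_0 G_0+m$ furnished by Condition~A, summing the geometric series $\sum_{k\ge 2}(1-\gamma)^{k-1} = (1-\gamma)/\gamma$, and adding the $B_0, B_1$ contributions produces the stated bound $m_1(\lambda_1)+m_2(\lambda_2)+(n_0 G_0+m)(1+\gamma)/\gamma$.

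The main obstacle is establishing the uniform estimate $\mathbb{E}[B_k\mid B_k>0, \text{past}]\le n_0 G_0 + m$ using only the first moment of $(G_n)$, which is precisely what distinguishes the present work from \cite{CouplingMoment}, where a square-integrable dominating sequence was needed to invoke Daley's inequality on the expected excess of a renewal process. The structural observation making this possible is that a failed trial ($B_k>0$) forces an overshoot of the target of at least $n_0$, so the overshoot splits into two controllable pieces: a ``missed renewals'' contribution corresponding to the waiting chain's renewals that would fall inside the skipped $n_0$-window (bounded by $n_0 G_0$ via the non-increasing tail $(G_n)$), and a single final inter-renewal gap with expectation at most $m = \sum_n G_n$. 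Carrying out this decomposition carefully and ensuring the bound holds uniformly over the conditioning information is the main technical step of the proof.
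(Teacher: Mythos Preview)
Your outline is essentially the same framework as the paper's: dominate $T$ by $S_\tau=\sum_{k\le\tau}B_k$, use Condition~B to get geometric decay $\mathbb{P}\{\tau>n\}\le(1-\gamma)^n$ (the paper quotes this as Lemma~8.5 of \cite{CouplingMoment}), and use Condition~A to bound each block $B_k$ uniformly by $n_0G_0+m$, then sum. The packaging differs slightly: the paper first reduces to $\theta^{(2)}_0=0$ and writes $T\le\theta^{(1)}_0+\theta^{(2)}_0+T'$, handling all $B_k$ uniformly, whereas you absorb $m_1(\lambda_1)$ and $m_2(\lambda_2)$ into $B_0$ and $B_1$; and the paper constructs the dominating sequence $\hat S_n$ explicitly as the convolution $\sum_{j\le n}G_{n-j-n_0}\sum_{k\le j}\mathbb{P}\{S_k=j,\tau\ge k\}$ rather than taking $\hat S_n=\mathbb{P}\{S_\tau>n\}$.

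The one place where your sketch does not match the actual argument is the justification of the uniform bound. Your ``missed renewals in the $n_0$-window plus one final gap'' heuristic is not how the paper obtains $n_0G_0+m$, and as written it is not a proof: controlling the excess of a renewal process past a level with only a first moment is exactly the delicate point. The paper's mechanism is Lemma~\ref{lem1}, which proves the \emph{tail} inequality
\[
\mathbb{P}\{S_k=j,\;B_{k+1}>t\}\le \mathbb{P}\{S_k=j,\tau\ge k\}\,G_{t-n_0}
\]
by conditioning on $\eta$, the last renewal of the \emph{opposite} chain at or before time $j+n_0$; such an $\eta$ always exists because $S_{k-1}\le j$ is a renewal for that chain (or because $X^{(2)}$ starts in $C$ when $k=0$). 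From $\eta$ the tail of the next gap is at most $G_{j+t-\eta}\le G_{t-n_0}$ since $\eta\le j+n_0$ and $G$ is non-increasing. Summing this tail bound over $t\ge0$ is what produces $n_0G_0+m$: the $n_0G_0$ term is simply $\sum_{t<n_0}G_{t-n_0}=n_0G_0$ (using the convention $G_n=G_0$ for $n<0$), and the $m$ term is $\sum_{t\ge n_0}G_{t-n_0}$. So the two ``pieces'' are ranges of $t$ in a tail sum, not a pathwise decomposition of the overshoot; once you phrase it this way your conditional expectation bound follows immediately.
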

\begin{proof}
First, we assume that $\theta ^{(2)}_{0} = 0$, which means that the
second chain starts from the set $C$.

Next, the upper bound for $T$ was introduced in
\cite{CouplingMoment} (it trivially follows from definitions):
%
\begin{equation}
\label{T_ineq} T \le \theta ^{(1)}_{0} + \sum
_{n=0}^{\tau }B_{n} = \theta
^{(1)}_{0} + \sum_{n\ge 0}
B_{n} \mathbh{1}_{\tau > n}.
\end{equation}

We introduce the variable
\begin{equation*}
T^{\prime }= \sum_{n=0}^{\tau }B_{n}.
\end{equation*}
Then $\theta ^{(1)}_{0} + T^{\prime }$ is pointwise bigger than $T$, and so
it stochastically dominates $T$.

So we will focus on building a dominating sequence\index{dominating sequence} for $T^{\prime }$ and
estimating its expectation.\index{expectation}

Let us consider
\begin{equation*}
\mathbb{P}\bigl\{T^{\prime }> n\bigr\} = \sum\limits
_{k=0}^{n} \mathbb{P}\{S _{k} \le n <
S_{k+1}\} = \sum\limits
_{k=0}^{n} \sum
\limits
_{j=k}^{n} \mathbb{P}\{S_{k}=j,
B_{k+1} > n-j\}.
\end{equation*}
Note that some terms are equal to $0$ in the latter sum because each
renewal trial\index{renewal ! trials} takes at least $n_{0}$ steps, but this fact does not
affect our reasoning.

Lemma~\ref{lem1} gives us the inequality
\begin{equation*}
\mathbb{P}\{S_{k}=j, B_{k+1}>n-j\} \le \mathbb{P}
\{S_{k}=j,\tau \ge k\} G_{n-j-n_{0}},
\end{equation*}
which allows us to build the upper bound for $\mathbb{P}\{T^{\prime }>
n\}$:
\begin{align*}
& \mathbb{P}\bigl\{T^{\prime }> n\bigr\} \le \sum\limits
_{k=0}^{n}\sum\limits
_{j=k} ^{n}
\mathbb{P}\{S_{k}=j, \tau \ge k \} G_{n-j-n_{0}}
\\
&\quad =\sum\limits
_{j=0}^{n} G_{n-j-n_{0}} \sum
\limits
_{k=0}^{j} P(S_{k}=j, \tau \ge k).
\end{align*}
So we can put
\begin{equation*}
\hat{S}^{\prime }_{n} = \sum\limits
_{j=0}^{n} G_{n-j-n_{0}} \sum
\limits
_{k=0}^{j} P(S_{k}=j,\tau \ge k),
\end{equation*}
and so the dominating sequence\index{dominating sequence} for $\mathbb{P}\{T^{\prime }> n\}$ is
built.

Let's now estimate its expectation:\index{expectation}
\begin{align*}
\mathbb{E}\bigl[T^{\prime }\bigr] &= \sum\limits
_{n\ge 0}
\mathbb{P}\bigl\{T^{\prime
}> n\bigr\} \le \sum\limits
_{n\ge 0}
\hat{S}^{\prime }_{n}
\\
&=\sum\limits
_{n\ge 0} \Biggl(\sum\limits
_{j=0}^{n}
G_{n-j-n_{0}} \sum\limits
_{k=0}^{j}
P(S_{k}=j,\tau \ge k) \Biggr)
\\
&=\biggl(\sum\limits
_{n\ge 0}G_{n-n_{0}} \biggr) \times
\biggl(\sum\limits
_{n\ge 0}\sum\limits
_{k\ge n}
\mathbb{P}\{S_{n}=k, \tau \ge n \} \biggr)
\\
&=(n_{0}G_{0} + m) \sum\limits
_{n\ge 0}
\mathbb{P}\{\tau \ge n\}.
\end{align*}

Lemma 8.5 from \cite{CouplingMoment} gives us the inequality
\begin{equation*}
\mathbb{P}\{\tau > n\} \le (1-\gamma )^{n}
\end{equation*}
which yields
%
\begin{equation}
\mathbb{E}\bigl[T^{\prime }\bigr] \le (n_{0}
G_{0} + m) (1 + 1/\gamma ) = (n_{0} G_{0}
+ m) (1 + \gamma )/\gamma .
\end{equation}

Now, we have to get rid of the assumption $\theta ^{(2)}_{0}=0$.
Following the same calculations as in \cite{Coupling} after
formula (\ref{eq20}) we may estimate $T$:
\begin{equation*}
T\le \theta ^{(1)}_{0} + \theta ^{(2)}_{0}
+ T^{\prime },
\end{equation*}
which entails the existence of a dominating sequence $\hat{S}_{n}
\ge \mathbb{P}\{T>n\}$\index{dominating sequence} and the estimate
\begin{equation*}
\mathbb{E}[T] \le \sum\limits
_{n\ge 0} \hat{S}_{n} \le
m_{1}(\lambda _{1}) + m_{2}(\lambda
_{2}) + (n_{0} G_{0} + m) (1+\gamma )/
\gamma .\qedhere
\end{equation*}
\end{proof}

\begin{lemma}\label{lem1}
Assume that the chain $X^{(2)}$\index{chains} starts from $x\in C$. Then, for all
$k,j,t\ge 0$ the following inequality holds true:
\begin{equation*}
\mathbb{P}\{S_{k} = j, B_{k+1} > t\} \le
P(S_{k} = j, \tau \ge k) G _{t-n_{0}}.
\end{equation*}
\end{lemma}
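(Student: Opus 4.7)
The plan is to condition on the \emph{last renewal of the trial chain before the qualifying one} and combine the Markov property of that chain with independence of $X^{(1)}$ and $X^{(2)}$.

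By the alternating structure, suppose $k=2m$ is even, so that the $(k+1)$-th trial is carried out by chain $X^{(2)}$; the odd case is identical with the roles of the two chains swapped. On the event $\{S_k=j,\ B_{k+1}>t\}$ with $t\ge 0$ we have $B_{k+1}>0$, so by the defining condition on $\nu_{k+1}$ in fact $B_{k+1}>n_0$, and therefore $\tau>k+1$. Hence $\{S_k=j,B_{k+1}>t\}\subseteq\{S_k=j,\tau\ge k,B_{k+1}>t\}$, and it suffices to bound the probability of the latter. Introduce
\begin{equation*}
\sigma:=\tau^{(2)}_{\nu_{k+1}-1},\qquad Y:=X^{(2)}_{\sigma}\in C,
\end{equation*}
the time and state of the last chain-$2$ renewal preceding the qualifying one. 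Because $\nu_{k+1}$ is the first index giving a chain-$2$ renewal at time $>j+n_0$, we must have $\sigma\le j+n_0$.

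Decompose
\begin{equation*}
P(S_k=j,B_{k+1}>t)=\sum_{s=0}^{j+n_0}\int_{C} P\bigl(S_k=j,\sigma=s,X^{(2)}_s\in dy,B_{k+1}>t\bigr).
\end{equation*}
For fixed $(s,y)$, the factor $\{S_k=j,\sigma=s,X^{(2)}_s\in dy\}$ is determined by the trajectory of $X^{(1)}$ up to time $j$ and the trajectory of $X^{(2)}$ up to time $s$, while $\{B_{k+1}>t\}$ is equivalent to $\{\theta^{(2)}_{\nu_{k+1}}>t+j-s\}$, where $\theta^{(2)}_{\nu_{k+1}}=\tau^{(2)}_{\nu_{k+1}}-\sigma$ is a single renewal interval of $X^{(2)}$ after time $\sigma$. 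By the Markov property of $X^{(2)}$ from $y\in C$ at time $s$, together with independence of $X^{(1)}$ and $X^{(2)}$, the conditional distribution of $\theta^{(2)}_{\nu_{k+1}}$ is governed by the probabilities $g^{(s,2)}_n(y)$ from (\ref{gx_def}), so the conditional probability of $\{B_{k+1}>t\}$ equals $G^{(s,2)}_{t+j-s}(y)$. Using (\ref{gx_def_})--(\ref{Gdef}), Condition A, monotonicity of $G_n$, and $s\le j+n_0$ (hence $t+j-s\ge t-n_0$), one obtains
\begin{equation*}
G^{(s,2)}_{t+j-s}(y)\le G^{(s,2)}_{t+j-s}\le G_{t+j-s}\le G_{t-n_0},
\end{equation*}
with the convention $G_n=G_0$ for $n<0$ taking care of the regime $t<n_0$. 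Multiplying through and summing $(s,y)$ telescopes the decomposition into $P(S_k=j,\tau\ge k)\cdot G_{t-n_0}$, as required.

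I expect the main obstacle to be the bookkeeping in the conditioning step: the event $\{S_k=j\}$ is not adapted to a single chain, so one must carefully identify the minimal information needed, namely the pair $(\sigma,Y)$ plus the trajectories of both chains up to times $s$ and $j$ respectively, and then argue that $\theta^{(2)}_{\nu_{k+1}}$ depends only on $X^{(2)}$ strictly after time $\sigma$, so that the Markov property together with cross-chain independence delivers the clean conditional bound by $G^{(s,2)}_{t+j-s}(y)$. Once this is pinned down, the remaining steps are only the monotonicity chain above and summation.
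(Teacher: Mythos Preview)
Your approach is essentially the paper's: condition on the last renewal of the ``trial'' chain at or before time $j+n_0$ (your $\sigma$ is the paper's $\eta$), then use the Markov property together with independence of $X^{(1)}$ and $X^{(2)}$ to peel off a tail bounded by $G_{t-n_0}$.

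One imprecision is worth flagging. You assert that $\{S_k=j,\ \sigma=s,\ X^{(2)}_s\in dy\}$ is determined by $X^{(1)}$ up to time $j$ and $X^{(2)}$ up to time $s$. This is not literally true: the event $\{\sigma=s\}$ already looks at $X^{(2)}$ on $(s,j+n_0]$ (it says there is no chain-$2$ renewal there), so conditioning on it biases the next interval to exceed $j+n_0-s$, and the conditional probability of $\{\theta^{(2)}_{\nu_{k+1}}>t+j-s\}$ is then a \emph{ratio} of tails rather than $G^{(s,2)}_{t+j-s}(y)$ itself. The fix is immediate: absorb the forward-looking part of $\{\sigma=s\}$ into the future event, i.e.\ rewrite $\{\sigma=s,\ B_{k+1}>t\}$ on $\{S_k=j\}$ as $\{X^{(2)}_s\in C\}\cap A_2(s+1,j+\max(t,n_0))$, which \emph{is} measurable in the right way and yields exactly $G^{(s,2)}_{j+t-s}(y)\le G_{t-n_0}$ after applying the Markov property at time $s$. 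The paper achieves the same effect by additionally decomposing over $S_{k-1}=u$ and writing the event as $\{S_{k-1}=u,\ \tau\ge k\}\cap A_1(u+n_0,j-1)\cap\{X^{(1)}_j\in C\}\cap\{X^{(2)}_v\in C\}\cap A_2(v+1,j+t)$, which cleanly separates the $X^{(1)}$- and $X^{(2)}$-parts before invoking independence. Either route closes the argument.
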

\begin{proof}
We start with the assumption that $k$ is even. We can write then:
\begin{equation*}
\mathbb{P}\{S_{k}=j, B_{k+1} > t\} = \mathbb{P}\bigl
\{S_{k} = j, X^{(2)} _{n} \notin C, n\in
\{j,j+n_{0},\ldots ,j+t\}\bigr\}.
\end{equation*}
Let's denote as $\eta $ the last renewal of the chain $X^{(2)}$\index{chains} before
the time $j+n_{0}$. By the construction of the sequence $B_{k}$,
\begin{equation*}
\eta \ge S_{k}-B_{k} = S_{k-1}.
\end{equation*}
Indeed, if $k>0$, this means that $S_{k-1}=S_{k}-B_{k}>0$ is a renewal
time\index{renewal ! times} for the chain $X^{(2)}$. If $k=0$, then we should put $S_{-1}=S
_{0}-B_{0}=0$, but the chain $X^{(2)}$\index{chains} has started in $C$, so
$\eta > 0$ by the condition of the lemma. So by construction, we have:
\begin{equation*}
S_{k} - B_{k} \le \eta < j + n_{0}.
\end{equation*}
Let us denote the 
sets
\begin{equation*}
A_{l}(s,t) = \bigl\{X^{(l)}_{m} \notin C, m
\in \{s,\ldots , t\}\bigr\},\quad  l \in \{1,2\},
\end{equation*}
and inspect the sample path of the chain $X^{(2)}$\index{chains} after the moment
$\eta $:
%
\begin{align}
\mathbb{P}\{S_{k}=j, B_{k+1} > t\} &= \mathbb{P}\bigl\{S_{k} = j, X^{(2)}
_{j} \notin C, A_{2}(j+n_{0}, j+t)\bigr\}\nonumber
\\
& =\sum\limits _{u, v} \mathbb{P}\bigl\{S_{k-1} = u, \tau \ge k,
A_{1}(u+n_{0},j-1), \nonumber\\
&\qquad\qquad  X^{(1)}_{j}\in C, X^{(2)}_{v} \in C, A_{2}(v+1,j+t)
\bigr\}, \label{lemma1_0} %
\end{align}
where $u<j$, and $v\ge u$ is a value of $\eta $. Note that
\begin{equation*}
\bigl\{S_{k-1}=u, X^{(1)}_{u} \notin C\bigr\}
= \{S_{k-1}=u, \tau >k-1\} = \{S _{k-1}=u, \tau \ge k\}.
\end{equation*}
Let us recall that the chains\index{chains} $X^{(1)}$ and $X^{(2)}$ are independent and
so we can split the probability in the formula (\ref{lemma1_0}) into the
product:
\begin{align*}
&\mathbb{P}\bigl\{S_{k-1} = u,\tau \ge k, A_{1}(u+n_{0},j-1),
X^{(1)}_{j} \in C, X^{(2)}_{v}\in
C,\ A_{2}(v+1,j+t) \bigr\}
\\
&\quad =\int_{E\times C} \mathbb{P}\bigl\{S_{k-1}=u,
A_{1}(u+n_{0},v), \bigl(X^{(1)}
_{v}, X^{(2)}_{v}\bigr)=(dx,dy)\bigr\}
\\
&\qquad \times\mathbb{P}\bigl\{A_{1}(v+1, j-1), X^{(1)}_{j}
\in C, A_{2}(v+1, j+t)| \bigl(X ^{(1)}_{v},X^{(2)}_{v}
\bigr)=(x,y)\bigr\}
\\
&\quad =\int_{E\times C} \mathbb{P}\bigl\{S_{k-1}=u,\tau \ge
k, A_{1}(u+n_{0},v), \bigl(X ^{(1)}_{v},
X^{(2)}_{v}\bigr)=(dx,dy)\bigr\}
\\
&\qquad \times\mathbb{P}\bigl\{A_{1}(v+1, j-1), X^{(1)}_{j}
\in C|X^{(1)}_{v}=x\bigr\} \mathbb{P}\bigl\{
A_{2}(v+1, j+t)|X^{(2)}_{v}=y\bigr\}.
\end{align*}
But $\mathbb{P}\{A_{2}(v+1,j+t)|X^{(2)}_{v}=y\}$, $y\in C$, is 
just the
probability that the next renewal \xch{after}{afer} $v$ will not happen until the moment
$j+t$. So we have
%
\begin{equation}
\label{lemma1_1} \sup_{y\in C} \mathbb{P}\bigl
\{A_{2}(v+1,j+t)|X^{(2)}_{v}=y\bigr\} =
G^{(v,2)} _{j+t-v} \le G_{j+t-v},
\end{equation}
where the latest inequality follows from the Condition A.

The above reasoning is valid in the case $\eta < j$, however, the case
$\eta \in \{j+1,j+n_{0}-1\}$ yields the same result. The maximal
possible value of $v$ is $j+n_{0}$, so, using the fact that sequence
$G_{n}$ is non-increasing, it follows from (\ref{lemma1_1}) that
%
\begin{equation}
\label{lemma1_2} \sup_{y\in C} \mathbb{P}\bigl
\{A_{2}(v+1,j+t)|X^{(2)}_{v}=y\bigr\} \le
G_{j+t-v} \le G_{t-n_{0}},
\end{equation}
which gives the estimate that does not depend on $v$. Taking into
account the inequality (\ref{lemma1_2}) we can derive
%
\begin{align}
&\mathbb{P}\{S_{k}=j, B_{k+1} > t\}\nonumber
\\
&\quad \le\sum\limits _{u,v} \int_{E\times C} \mathbb{P}\bigl\{S_{k-1}=u, \tau \ge k,A
_{1}(u+n_{0},v), (X^{(1)}_{v}, X^{(2)}_{v})=(dx,dy)\bigr\}\nonumber
\\
&\qquad \times\mathbb{P}\bigl\{A_{1}(v+1, j-1), X^{(1)}_{j}\in C|X^{(1)}_{v}=x\bigr\} G_{t-n
_{0}} \le \mathbb{P}\{S_{k}=j,\tau \ge k\} G_{t-n_{0}}.
\end{align}

To complete the proof, we should consider the case when $k$ is odd. In
this case, $k>0$ which means $k-1\ge 0$ and we don't have a problem with
$k=0$ like in the even case. So, similar reasoning will yield the same
inequality for odd $k$.
\end{proof}

\section{Application to the birth-death processes}%
\label{sec3}
In the paper \cite{CouplingMoment} we had derived an estimate for
a simultaneous renewal of two time-inhomogeneous birth-death processes
$X^{(1)}$ and $X^{(2)}$ with the following transition probabilities on
the $t$-th step:
%
\begin{equation}
P_{t} = \lleft ( %
\begin{array}{cccccc}
\alpha _{t0}& 1-\alpha _{t0}& 0& 0& 0&\ldots
\\
0& \alpha _{t1}& 0& 1-\alpha _{t1}& 0& \ldots
\\
0& 0& \alpha _{t2}& 0& 1-\alpha _{t2}& \ldots
\\
\ldots
\end{array} %
\rright )
\end{equation}
and
%
\begin{equation}
Q_{t} = \lleft ( %
\begin{array}{cccccc}
\beta _{t0}& 1-\beta _{t0}& 0& 0& 0&\ldots
\\
0& \beta _{t1}& 0& 1-\beta _{t1}& 0& \ldots
\\
0& 0& \beta _{t2}& 0& 1-\beta _{t2}& \ldots
\\
\ldots
\end{array} %
\rright )
\end{equation}

The set $C$ is equal to $0$:
\begin{equation*}
C=\{0\}.
\end{equation*}

An estimate from \cite{CouplingMoment} involves the second moment
of the dominating sequence.\index{dominating sequence} Let us compare an estimate from
\cite{CouplingMoment} with the one that follows from  Theorem~\ref{theorem1}.

We will start with building $\gamma $. As in
\cite{CouplingMoment} we notice that for every $t>0$, $g^{(l)}_{1} =
\alpha _{t0} > 0$, and assume
%
\begin{equation}
\label{gamma0_cond} \gamma _{0} = \inf_{t} \{
\alpha _{t0}, \beta _{t0}\} > 0.
\end{equation}

As it has been shown in the \cite{CouplingMoment} $\gamma $ can
be chosen in the following way:
%
\begin{equation}
\label{gamma_def} \gamma = \exp \bigl(\hat{\mu }ln(\gamma _{0})/
\gamma _{0} \bigr) = \gamma _{0}^{\hat{\mu }/\gamma _{0}}
\end{equation}
with the $n_{0}=0$.

We will use the same dominating sequence\index{dominating sequence} as in
\cite{CouplingMoment}. To do this, we consider a standard random walk
with a parameter $p>1/2$ and a probability $f_{n}$ of a first return to
$0$ in $n$ steps. It has been shown in \cite{Feller}, Chapter
XIII, that the generating function $F(z)$ for $f_{n}$ looks like
%
\begin{equation}\label{eq20}
F(z) = \frac{1-\sqrt{1-4p(1-p)s^{2}}}{2(1-p)}.
\end{equation}
We have shown in the \cite{CouplingMoment} that $G_{n}=\sum\limits _{k>n} f_{n}/p$ is a dominating sequence\index{dominating sequence} (non-probabilistic) for
renewal sequences\index{renewal ! sequences} generated by $X^{(1)}$ and $X^{(2)}$, if
%
\begin{equation}
p(1-p)\ge \xch{\sup}{sup}_{t,j}\bigl\{\alpha _{tj}(1-\alpha
_{tj}),\beta _{tj}(1-\beta _{tj})\bigr\}
\end{equation}

The estimate for the expectation\index{expectation} of the simultaneous renewal time in
case of both chains started at $C$ looks like
%
\begin{equation}
E_{1} = \mathbb{E}[T] \le \hat{\mu }_{2}/\gamma + \hat{
\mu }_{1}/ \gamma ^{2},
\end{equation}
where
%
\begin{equation}
\begin{aligned}
\hat{\mu }_{1}&= 2/(2p-1) + 1 ,
\\
\hat{\mu }_{2} = (2p-1)^{-1}  &\biggl( 2 + \frac{8(1-p)}{1-4p}  \biggr) +
2/(2p-1) + 1.
\end{aligned}
\end{equation}
In our case the estimate looks like
\begin{equation*}
E_{2} = \mathbb{E}[T] \le \hat{\mu }_{1}(1+\gamma )/
\gamma .
\end{equation*}

So, we can see that
\begin{equation*}
E_{2} = (E_{1} - \hat{\mu }_{2}/\gamma ) (1+
\gamma )\gamma ,
\end{equation*}
for all $\gamma \in (0,\frac{\sqrt{5}-1}{2})$, $\gamma (1+\gamma ) <
1$ which means that $E_{2}$ is a better estimate than $E_{1}$. In the
same time, $\gamma _{0}$ is typically a small number, which makes
$\gamma $ to be a very small number. So, the improvement in estimate can
be significant for practical applications.

\section{Conclusions}%
\label{sec4}
In this article, we obtained an estimate for the expectation\index{expectation} of the
simultaneous renewal time for two time-inhomogeneous, discrete-time
Markov chains on a general state space. By the simultaneous renewal of
two Markov chains\index{Markov chains} $X$ and $X^{\prime }$ we understand the first moment
of hitting a specific set $C$ by both chains.\index{chains}

We have shown that under conditions (A) and (B) an estimate has the form
%
\begin{equation}
\mathbb{E}[T] \le \sum\limits
_{n \ge 0} \hat{S}_{n} \le
m_{1}(\lambda _{1}) + m_{2}(\lambda
_{2}) + (n_{0} G_{0} + m) (1+\gamma )/
\gamma .
\end{equation}

We have shown how the parameters $\gamma , n_{0}, G_{0}, m_{1}(\lambda
_{1}), m_{2}(\lambda _{2})$, and the sequence $\hat{S}_{n}$ included in the
estimate can be calculated.

This result allows us to refine the stability estimate\index{stability estimates} for two
time-inhomogeneous Markov chains, which is a subject of a further
investigation.


\begin{appendix}
\appendix
\end{appendix}



\end{document}